\documentclass[a4paper,11pt]{article}

\usepackage[a4paper]{geometry}
\usepackage{color,graphicx,psfrag}
\usepackage{bbm}
\usepackage{amssymb,amsmath,amsthm}
\usepackage{authblk}
\usepackage[all,cmtip]{xy}


\newcommand{\R}{\mathbbm{R}}

\newcommand{\F}{\mathbbm{F}}

\DeclareMathOperator{\supp}{supp}
\newcommand{\I}{\mathbbm{1}}

\theoremstyle{plain}
\newtheorem{theorem}{Theorem}

\newtheorem{lemma}[theorem]{Lemma}

\theoremstyle{definition}
\newtheorem{definition}[theorem]{Definition}
\newtheorem{remark}[theorem]{Remark}

\newtheorem{remarks}[theorem]{Remarks}
\newtheorem{example}[theorem]{Example}

\theoremstyle{remark}

\newcommand{\map}{f}
\newcommand{\faces}[2]{\Sigma_{#1}(#2)}

\newcommand{\incident}[2]{[#1\colon #2]}
\newcommand{\expfactor}{\eta}
\newcommand{\sysbound}{\vartheta}
\newcommand{\overlap}{\mu}
\newcommand{\isobound}{L}
\newcommand{\sparsity}{\varepsilon}
\DeclareMathOperator{\im}{im}

\title{On Expansion and Topological Overlap\footnote{Research supported by the Swiss National Science Foundation (Project SNSF-PP00P2-138948).\newline An extended abstract of this paper \cite{ExtAbs} appeared in the Proceedings of the 32nd International Symposium on Computational Geometry (SoCG 2016).}}
\author[1]{Dominic Dotterrer}
\author[2]{Tali Kaufman}
\author[3]{Uli Wagner}
\affil[1]{
University of Chicago
Department of Mathematics
5734 S.~University Avenue, Chicago, Illinois 60637, USA.\\ \texttt{d.dotterrer@math.uchicago.edu}
}
\affil[2]{
Bar-Ilan University, Department of Computer Science, 5290002 Ramat Gan, Israel.\\ \texttt{kaufmant@macs.biu.ac.il}
}
\affil[3]{
IST Austria, Am Campus 1, 3400 Klosterneuburg, Austria.\\ \texttt{uli@ist.ac.at}
}

\begin{document}
\maketitle

\begin{abstract}
We give a detailed and easily accessible proof of Gromov's \emph{Topological Overlap Theorem}.
Let $X$ be a finite simplicial complex or, more generally, a finite polyhedral cell complex of dimension $d$.
Informally, the theorem states that if $X$ has sufficiently strong \emph{higher-dimensional expansion properties} (which generalize
edge expansion of graphs and are defined in terms of cellular cochains of $X$) then $X$ has the following \emph{topological overlap property}: 
for every continuous map $X\rightarrow \R^d$ there exists a point $p\in \R^d$ that is contained in the images of a positive fraction $\overlap>0$ of the $d$-cells of $X$. 
More generally, the conclusion holds if $\R^d$ is replaced by any $d$-dimensional piecewise-linear (PL) manifold $M$, with a constant $\overlap$
that depends only on $d$ and on the expansion properties of $X$, but not on $M$.
\end{abstract}

\section{Introduction}
Let $X$ be a finite polyhedral cell complex\footnote{See \cite[Sec.~12]{Bjorner} or \cite[Ch.~I]{Zeeman} for more background on polyhedral cell complexes (in \cite{Zeeman}, they are called convex linear cell complexes).} of dimension $\dim X=d$. Gromov~\cite{Gromov} recently showed that if $X$ has
sufficiently strong \emph{higher-dimensional expansion properties} (which generalize edge expansion of graphs, see below for the precise definition) then $X$ has the following \emph{topological overlap property}:
For every every continuous map $\map \colon X\to \R^d$, there exists a point $p\in \R^d$ that is contained in the images of 
some positive fraction of the $d$-cells of $X$, i.e.,
\begin{equation}
\label{eq:overlap}
\hfill |\{\sigma\in\faces{d}{X}\colon p\in f(\sigma)\}|\geq \overlap\cdot |\faces{d}{X}|,\hfill
\end{equation}
where $\faces{k}{X}$ denotes the set of $k$-dimensional cells of $X$, $0\leq k\leq d$, and $\overlap>0$.
More generally, the same conclusion holds if the target space $\R^d$ is replaced by a $d$-dimensional manifold $M$, and the \emph{overlap constant} $\mu>0$ depends only on the dimension $d$ and on the constants quantifying the expansion properties of $X$, but not on $M$. 
For technical reasons, we will assume that the manifold $M$ admits a \emph{piecewise-linear} (\emph{PL}) triangulation, so that we can apply standard tools to perturb a given map to \emph{general position}. We refer to the book by Rourke and Sanderson~\cite{book} or to the lecture notes by Zeeman~\cite{Zeeman} for background and standard facts about piecewise-linear topology.

In the special case where $X$ is the $n$-dimensional simplex $\Delta^n$ (or its $d$-dimensional skeleton), determining the optimal overlap constant for maps $\Delta^n\to \R^d$ is a classical problem in discrete geometry, also known as the \emph{point selection problem} \cite{BF,Barany} and originally only considered for affine maps. Apart from the generalization from affine to arbitrary continuous
maps, Gromov's proof also led to improved estimates for the point selection problem, and a number of papers have appeared with expositions and simplified proofs of Gromov's result in this special case $X=\Delta^n$, see \cite{Karasev,MW} and \cite[Sec.~7.8]{Guth}.

The goal of the present paper is to provide a detailed and easily accessible proof of Gromov's result for general complexes $X$, see Theorem~\ref{thm:overlap} below. This is a crucial ingredient for obtaining examples of simplicial complexes $X$ of \emph{bounded degree} (i.e., such that every vertex is incident to a bounded number of simplices) that have the topological overlap property~\cite{KKL,EK}. The basic idea of the proof is the same as Gromov's, but we present a simplified and streamlined version of the proof that uses only elementary topological notions (general position for piecewise-linear maps, algebraic intersection numbers, cellular chains and cochains, and chain homotopies) and avoids much of the machinery  used in Gromov's original paper (in particular, the simplicial set of cocycles). 

For stating the result formally, we need to discuss higher-dimensional expansion properties of cell complexes. The relevant notion of expansion originated in the work of Linial and Meshulam~\cite{LM} and of Gromov~\cite{Gromov} and generalizes
edge expansion of graphs (which corresponds to $1$-dimensional expansion). To define $k$-dimensional expansion,
we need two ingredients: first, information about incidences between cells of dimensions $k$ and $k-1$ and, second,
a notion of \emph{discrete volumes} in $X$. To define these, it is convenient to use the language of \emph{cellular cochains}
of $X$.

\subsubsection*{Cellular Cochains}
Let $X$ be a polyhedral cell complex, let $\faces{k}{X}$ denote the set of $k$-dimensional cells of $X$, and let $C^k(X):=C^k(X;\F_2):=\F_2^{\faces{k}{X}}$ be the space of $k$-dimensional cellular cochains with coefficients in the field $\F_2$; in other words $C^k(X)$ is the space of functions $a\colon \faces{k}{X}\to \F_2=\{0,1\}$. For a pair $(\sigma,\tau) \in \faces{k}{X}\times \faces{k-1}{X}$, let $\incident{\sigma}{\tau}$ be $1$ or $0$ depending on whether $\tau$ is incident to $\sigma$ (i.e., whether $\tau$ is contained in the boundary $\partial \sigma$) or not. This incidence information is recorded in the \emph{coboundary operator}, which is a linear map
$\delta \colon C^{k-1}(X)\to C^k(X)$ given by $\delta a(\sigma):=\sum_{\tau\in \faces{k-1}{X}} \incident{\sigma}{\tau}a(\tau)$.

The elements of the subspaces $Z^k(X):=\ker(\delta\colon C^k(X)\to C^{k+1}(X))$ and $B^k(X):=\im (\delta\colon C^{k-1}(X)\to C^k(X))$ are called $k$-dimensional \emph{cocycles} and \emph{coboundaries}, respectively. The composition of consecutive coboundary operators is zero, i.e., $B^k(X)\subseteq Z^k(X)$, and $H^k(X)=Z^k(X)/B^k(X)$ is the $k$-dimensional \emph{homology group} (with $\F_2$-coefficients) of $X$.
This information is customarily recorded in the \emph{cellular cochain complex}\footnote{More precisely, we work with the \emph{augmented} cellular cochain complex of $X$, unless stated otherwise, i.e., we consider $X$ to have a unique $(-1)$-dimensional cell, the empty cell $\emptyset$, which is incident to every vertex of $X$.} of $X$:

\begin{equation}
\label{eq:cochain-complex}
\xymatrix@C=4ex{
0 \ar[r] 
&
\F_2=C^{-1}(X) \ar[r]^-{\delta}
&
C^0(X) \ar[r]^-{\delta}
&
C^1(X) \ar[r]^-{\delta}
&
\cdots \ar[r]^-{\delta}
&
C^{d-1} (X) \ar[r]^-{\delta}
&
C^d (X) \ar[r]
&
0
}
\end{equation}

\subsubsection*{Norm, cofilling, expansion and systoles}

For $\alpha\in C^k(X)$, let $|\alpha|$ denote the \emph{Hamming norm} of $\alpha$, i.e., the cardinality of the \emph{support}
$\supp(\alpha):=\{\sigma \in \faces{k}{X}\colon \alpha (\sigma)\neq 0\}$, which we think of as a measure of ``discrete $k$-dimensional volume.'' In fact, it will be convenient to allow more general norms on cochains; the following definition summarizes the properties that we will need.

\begin{definition}[Norm on cochains]
A \emph{norm} on the group $C^\ast(X)=\bigoplus_{k=0}^d C^k(X)$ of cellular cochains of $X$ with $\F_2$-coefficients is a function ${\|\cdot \|}\colon C^\ast(X;\F_2)\rightarrow \R_{\geq 0}$ that satisfies the following properties for all cochains $\alpha,\beta\in C^k(X)$, $0\leq k\leq d$:
\begin{enumerate}
\item $\|0\|=0$.\smallskip
\item \emph{Triangle inequality}:\quad $\|\alpha+\beta\|\leq \|\alpha\| +\|\beta\|$.
\end{enumerate}
\smallskip
Furthermore, we will assume throughout that the norm satisfies the following:
\begin{enumerate}
\setcounter{enumi}{2}
\item \emph{Monotonicty}:\quad $\|\alpha\|\leq \|\beta\|$ whenever $\supp(\alpha)\subseteq \supp(\beta)$.
\end{enumerate}
\end{definition}

From now on, we work with a fixed norm on the cochains of $X$. We assume that the norm is normalized such that $\|\I_X^k\|=1$ for $0\leq k\leq d$, where $\I_X^k\in C^k(X)$ assigns $1$ to every $k$-cell of $X$.
In particular, when working with the Hamming norm, we will consider its normalized version
$$\|\alpha\|_H:=\frac{|\alpha|}{|\faces{k}{X}|}.$$

Given $\beta \in B^k(X)$, we say that $\alpha\in C^{k-1}(X)$ \emph{cofills} $b$ if $\beta=\delta \alpha$.  
Once we have a notion of discrete volumes, we can consider the following \emph{(co)isoperimetric question}:
Can we bound the minimum norm of a cofilling for a coboundary $\beta$ in terms of the norm of $\beta$?

\begin{definition}[Cofilling/Coisoperimetric Inequality] Let $\isobound>0$.
We say that $X$ satisfies a $L$-\emph{cofilling inequality} (or \emph{coisoperimetric inequality}) in dimension $k$ if, for every $\beta\in B^k(X)$, there exists some $\alpha\in C^{k-1}(X)$ such that $\delta\alpha=\beta$ and $\|\alpha\|\leq \isobound \|\beta\|$.
\end{definition}

Any two cofillings of a given coboundary differ by a cocycle. Thus, $X$ satisfies an $L$-cofilling inequality in dimension $k$ if and only if
\begin{equation}
\label{eq:-coiso-reformulation-1}
\hfill \|\delta\alpha\|\geq \frac{1}{\isobound} \cdot \min\{\|\alpha+\zeta\|:\zeta\in Z^{k-1}(X)\}\qquad \textrm{for all}\quad \alpha\in C^{k-1}(X). \hfill
\end{equation}

We can strengthen \eqref{eq:-coiso-reformulation-1} by replacing cocycles with coboundaries and obtain a condition that also allows us to draw conclusions about the cohomology of $X$. For $\alpha \in C^{k-1}(X)$, let
\begin{equation}
\label{eq:dist-cobd}
\hfill \|[\alpha]\|:=\min\{\|\alpha+\beta\|:\beta \in B^{k-1}(X)\}\hfill
\end{equation}
denote the distance (with respect to the norm $\|\cdot\|$) of $\alpha$ to the space $B^{k-1}(X)$ of coboundaries.

\begin{definition}[Coboundary Expansion] Let $\expfactor>0$.
We say that $X$ is \emph{$\expfactor$-expanding} in dimension $k$, if for every $(k-1)$-cochain $\alpha \in C^{k-1}(X)$,
\begin{equation}
\label{eq:expansion}
\hfill \|\delta \alpha\|\geq \expfactor\cdot \|[\alpha]\|. \hfill
\end{equation}
\end{definition}

\begin{lemma} Let $\eta>0$.
A complex $X$ is $\expfactor$-expanding in dimension $k$ if and only if $H^{k-1}(X)=0$ and $X$ satisfies a $1/\expfactor$-coisoperimetric inequality in dimension $k$.
\end{lemma}
\begin{proof}
Suppose that $X$ is $\expfactor$-expanding in dimension $k$.
Clearly, \eqref{eq:expansion} implies \eqref{eq:-coiso-reformulation-1}, i.e., $X$ satisfies a $1/\expfactor$-cofilling inequality.
Moreover, if $\alpha \in C^{k-1}(X)\setminus B^{k-1}(X)$ then $\|[\alpha]\|>0$, hence $\|\delta \alpha\|>0$, hence $\alpha\not\in Z^{k-1}(X)$. Thus, $Z^{k-1}(X)=B^{k-1}(X)$, i.e., $H^{k-1}(X)=0$.

Conversely, assume that $H^{k-1}(X)=0$. Then $Z^{k-1}(X)=B^{k-1}(X)$, so \eqref{eq:expansion} and \eqref{eq:-coiso-reformulation-1} are equivalent.
\end{proof}

In some cases, however, vanishing of $H^{k-1}(X)$ turns out to be too stringent a requirement, and we can replace it by the condition that every nontrivial cocycle has large norm:

\begin{definition}[Large Cosystoles] Let $\sysbound>0$.
We say that $X$ has \emph{$\sysbound$-large cosystoles} in dimension $j$ if $\|\alpha\|\geq \sysbound$ for
every $\alpha\in Z^j(X)\setminus B^j(X)$.
\end{definition}

\begin{example}
Consider the case $k=1$, with the normalized Hamming norm. In this case, $\expfactor$-expansion in dimension $1$ corresponds to $\expfactor$-edge expansion of a graph (the $1$-skeleton of the complex). An $\isobound$-cofilling inequality in dimension $1$ means that every connected component of the graph is $1/\isobound$-edge expanding. Having $\sysbound$-large cosystoles in dimension $0$ means that every connected component contains at least a $\sysbound$-fraction of the vertices.
\end{example}

\subsubsection*{Local Sparsity of $X$}
For the formal statement of the overlap theorem, we need one more technical condition on $X$.
For a cell $\tau$ of $X$, let $\iota^k_\tau$ be the $k$-dimensional cochain that assigns $1$ to $k$-cells of $X$ that have nonempty intersection with $\tau$ and $0$ otherwise.

\begin{definition}(Local Sparsity) Let $\varepsilon>0$. We say that $X$ is \emph{locally $\sparsity$-sparse} (with respect to a given norm $\|\cdot\|$) if $\|\iota^k_\tau\| \leq \sparsity$ for every nonempty cell $\tau$ of $X$ and every $k$, $0\leq k\leq d$.
\end{definition}

For example, in the case of the normalized Hamming norm $\|\cdot\|_H$, local sparsity means that
$$|\{\sigma \in \faces{k}{X}\colon \tau\cap \sigma\neq \emptyset \}| \leq \sparsity |\faces{k}{X}|,$$
for every nonempty cell $\tau$ of $X$.

\subsubsection*{Formal Statement of the Theorem}
We are now ready to state Gromov's theorem.

\begin{theorem}[Gromov's Topological Overlap Theorem~\cite{Gromov})]
\label{thm:overlap}
For every $d\geq 1$ and $\isobound,\sysbound>0$ there exists $\sparsity_0=\sparsity_0(d,\isobound,\sysbound)>0$ such that the following holds:

Let $X$ be a finite cell complex of dimension $d$, and let $\|\cdot\|$ be a norm on the cochains of $X$.
Suppose that
\begin{enumerate}
\item $X$ satisfies a $\isobound$-cofilling inequality in dimensions $1,\dots,d$;
\item $X$ has $\sysbound$-large cosystoles in dimensions $0,\dots,d-1$; and
\item $X$ is locally $\sparsity$-sparse for some $\sparsity \leq \sparsity_0$.
\end{enumerate}

Then for every continuous map $\map\colon X\rightarrow M$ into a compact connected
$d$-dimensional piecewise-linear (PL) manifold $M$,
there exists a point $p\in M$ such that\footnote{Here, we use that a subset of $\Sigma_k(X)$ can be identified with a
$k$-dimensional cellular cochain, its indicator function.}
\begin{equation}
\label{eq:norm-overlap}
\hfill \|\{\sigma \in \Sigma_d(X) \mid p \in f(\sigma) \} \| \geq \overlap, \hfill
\end{equation}
where $\overlap=\overlap(d, \sparsity, \isobound, \sysbound)>0$.
\end{theorem}

\begin{remark}
The assumption that the manifold $M$ is compact is not essential; moreover, we may assume without loss of generality that $M$ has no boundary. Indeed, since $X$ is compact, the image $\map(X)$ is compact and hence
contained in a compact submanifold $N$ of $M$ with boundary $\partial N$; we can turn $N$ into a compact manifold without boundary by doubling, i.e., by glueing two copies of $N$ along their boundary.
\end{remark}

If a complex $X$ satisfies the conclusion of the theorem, we also say that $X$ is \emph{topologically $\overlap$-overlapping} for maps into $d$-dimensional PL manifolds. If the conclusion holds true just for \emph{affine maps} and $M=\R^d$, we say that
$X$ is \emph{geometrically $\overlap$-overlapping}.

\section{Preliminaries from Piecewise-Linear Topology}

\subsection{Assumptions on $M$}

We assume that $M$ is a compact connected piecewise-linear (PL) $d$-dimensional manifold, without boundary. That is, we assume that $M$ admits a triangulation\footnote{The triangulation is necessarily finite, since $M$ is compact.} $T$ with the property that the link of every nonempty simplex $\tau$ of $T$ is a PL sphere of dimension $d-1-\dim(\tau)$; throughout this paper, we only consider triangulations of $M$ that have this property.


\subsection{Approximation by PL maps}
\label{sec:approx}

We can fix a metric on $M$, e.g., by fixing a triangulation $T$ of $M$ and by considering each simplex of $T$ as a regular simplex with edge length $1$. By subdividing a given triangulation $T$ sufficiently often, we can pass to a
new triangulation $T'$ in which each simplex has diameter at most $\rho>0$, for a given $\rho$ (see, e.g., \cite[Sec.~1.7]{Mat}).

By the standard \emph{simplicial approximation theorem}~\cite{Pras}, given the triangulation $T'$ of $M$ and a continuous map $\map \colon X\to M$, there is a simplicial approximation of $\map$, i.e., there is a subdivision $X'$ of $X$ and a simplicial map $g\colon X'\to T'$ such that, for each point $x \in X$, the image $g(x)$ belongs to the (uniquely defined) simplex of $T'$ whose relative interior contains $\map(x)$. (In fact, $g$ is even homotopic to $\map$, but we will not need that.)
This map $g$ is a PL map $X\to M$ and the distance between $g(x)$ and $\map(x)$ is at most the maximum diameter of any simplex in $T'$, hence at most $\rho$, for every $x\in X$.

Thus, by the preceding discussion and the following lemma, it suffices to prove Thm.~\ref{thm:overlap} for PL maps.
\begin{lemma}
Let $\map \colon X\to M$ be a continuous map, and let $g_n\colon X\to M$ be a sequence of continuous maps that converges to $f$ pointwise, i.e., $g_n(x) \to f(x)$ as $n\to \infty$, for every $x\in X$. Suppose that for every $g_n$ there exists a point $p_n\in M$ such that $\|\{\sigma \in \Sigma_d(X) \mid p_n \in g_n(\sigma) \} \| \geq \overlap$. Then there exists a point $p\in M$ such that \eqref{eq:norm-overlap} holds.
\end{lemma}
\begin{proof}
By compactness, there is a subsequence of the points $p_n$ that converges to a point $p$. We claim that $p$ is the desired point. Since there are only finitely many cells in $X$, there is some $\rho>0$ such that for every $d$-cell $\sigma$ of $X$ with
$p\not \in \map(\sigma)$, the distance between $p$ and $f(\sigma)$ is at lest $\rho$. Choose $n$ sufficiently large so that
the distance between $p_n$ and $p$ is less than $\rho/2$, and the distance between $f(x)$ and $g_n(x)$ is at most $\rho/2$, for every $x\in X$. If $p_n\in g_n(\sigma)$, then the distance between $p$ and $f(\sigma)$ is less than $\rho$, so by the choice of $\rho$, we have $p\in f(\sigma)$. Therefore, $\{\sigma \in \Sigma_d(X) \mid p \in f(\sigma) \} \subseteq \{\sigma \in \Sigma_d(X) \mid p_n \in g_n(\sigma) \}$, and the desired conclusion follows by the monotonicity property of the norm.
\end{proof}

\subsection{General Position}
We refer to \cite[Ch. VI]{Zeeman} for a comprehensive treatment of general position for PL maps. The following definition summarizes the properties that we will need. 
\begin{definition}
Let $X$ be a finite polyhedral cell complex, $M$ a PL manifold, and let $f\colon X\to M$ be a PL map. 
\begin{enumerate}
\item
We say that $f$ is in \emph{strongly general position} (with respect to the given decomposition of $X$ into polyhedral cells) if, for every $r\geq 1$ and  
pairwise disjoint cells $\sigma_1,\ldots,\sigma_r$ of $X$,
\begin{equation}
\label{eq:strongly-general-position-f}
\hfill \dim\big({\textstyle \bigcap_{i=1}^r \map(\sigma_i)} \big) \leq \max \big\{-1, \big({\textstyle \sum_{i=1}^r} \dim \sigma_i \big) - d(r-1)\big\}.
\hfill
\end{equation}
In particular, if the number of the right-hand side is $-1$, then the intersection is empty.
\item Given a triangulation $T$ of $M$, we that that $f$ is in \emph{general position with respect to $T$} if, for every simplex $\sigma$ of $X$ and every simplex $\tau$ of $T$, $\dim(\map(\sigma)\cap \tau)\leq \max\{-1,\dim \sigma +\dim \tau -d\}$; moreover, if $\dim \sigma+\dim \tau=d$ then we require that $f(\sigma)$ and $\tau$ intersect transversely (either the intersection is empty, or they intersect locally like complementary linear subspaces).
\end{enumerate}
\end{definition}

The main fact that we will need is that any map $f\colon X\to M$ can be approximated arbitrarily closely by a PL map that is in general position:

\begin{lemma}[{\cite[Ch. VI]{Zeeman}}]
\label{lem:GP}
Let $f\colon X\to M$ be a PL map and let $T$ be a triangulation of $M$. Then, up to a small perturbation, we may assume that 
$f$ is  general position with respect to $T$ and in strongly general position.
\end{lemma}

Furthermore, we will need the following notion of \emph{sufficiently fine triangulations}:
\begin{definition}
Let $T$ be a triangulation of $M$ and let $\map \colon X\to M$ be a PL map in general position with respect to $T$. 
We say that $T$ is \emph{sufficiently fine} with respect to $f$ if, for every $k>0$ and every $k$-simplex $\tau$ of $T$,
$$\|\{\sigma\in \faces{d-k}{X}\colon f(\sigma)\cap \tau\neq \emptyset\}\| \leq \frac{d}{k} \max\{ \|\iota^{d-k}_{\sigma'}\| \colon \sigma' \in \faces{d-k}{X}\}.$$
\end{definition}
\begin{lemma}
\label{lem:suff-fine}
Suppose that $\map \colon X\to M$ be a PL map in strongly general position and in general position with respect to a triangulation $T$ of $M$. Then (by refining $T$, if necessary), we may assume furthermore that $T$ is sufficiently fine
with respect to $f$.
\end{lemma}
\begin{proof}
If $\map$ is in general position with respect to $T$, then by choosing points at which we subdivide $T$ in a sufficiently generic way, we can assume that $f$ is also in general position with respect to the subdivision $T'$. Thus, we may assume that $T$ already has the property that every simplex of $T$ has diameter smaller than some specified parameter $\rho>0$.

Now suppose that $\sigma_1,\ldots,\sigma_r$ are pairwise distinct simplices of $X$ with $f(\sigma_1)\cap \ldots \cap f(\sigma_r)=\emptyset$. By compactness, there exists $\rho=\rho(\sigma_1,\ldots,\sigma_r)>0$ such that no matter how we select $x_i\in f(\sigma_i)$, some pair $x_i,x_j$ has distance at least $\rho$. Since $X$ is finite, there is some $\rho>0$ that works for all finite collections of simplices whose images do not have a common point of intersection. Suppose now that we have chosen $T$ such that all simplices in $T$ have diameter at most $\rho/2$. 

Given $\tau\in T$ of dimension $k>0$ consider $S(\tau):=\{\sigma\in \faces{d-k}{X}\colon f(\sigma)\cap \tau\neq \emptyset\}$.
We claim that $\bigcap_{\sigma\in S(\tau)} f(\sigma)\neq \emptyset$. Otherwise, for every choice of points $x_\sigma \in f(\sigma)$, $\sigma\in S(\tau)$, there would be some pair $\sigma,\sigma'$ such that $x_\sigma$ and $x_{\sigma'}$ have distance at least $\rho$. However, by the definition of $S(\tau)$, we can choose each $x_\sigma$ to lie in the intersection $f(\sigma)\cap \tau$, from which it follows that for every pair $\sigma,\sigma' \in S(\tau)$, the distance between $x_\sigma$ and $x_{\sigma'}$ is at most the diameter of $\tau$, i.e., at most $\rho/2$.

Let $\{\sigma_1,\ldots,\sigma_r\} \subseteq S(\tau)$ be an inclusion-maximal subset with $\sigma_i\cap \sigma_j=\emptyset$ (i.e., the $\sigma_i$ are pairwise vertex-disjoint; we can pick this subset greedily). Since $f$ is in strongly general position and  
$\bigcap_{\sigma\in S(\tau)} f(\sigma)\neq \emptyset$, it follows that $\sum_{i=1}^r(d-k) - d(r-1) \geq 0$; this implies $r\leq d/k$. Now, every other simplex $\sigma \in S(\tau)$ 
intersects one of the $\sigma_i$. Thus, by monotonicity of the norm and by the triangle inequality, $\|S(\tau)\|\leq \frac{d}{k} \max_{1\leq i\leq r} \|\iota^{d-k}_{\sigma_i}\|$.
\end{proof}

\subsection{Intersection Numbers}

\begin{definition}[Intersection numbers]
If $T$ is a PL triangulation of $M$ and if $\map\colon X\rightarrow M$ is a PL map in general position with respect to $T$, then for every pair of chains $a\in C_{d-k}(X;\F_2)$ and $b\in C_k(T;\F_2)$, we can define their (algebraic) \emph{intersection number}
$$\map(a)\cdot b \in \F_2$$
as follows: If $\sigma$ is a $(d-k)$-dimensional cell of $X$ and if $\tau$ is a $k$-dimensional simplex of $T$, 
then by general position, the intersection $f(\sigma)\cap \tau$ consists of a finite number of points, and the
intersection number $f(\sigma)\cdot \tau$ is defined as the number of intersections\footnote{There is a small
caveat: In the case $k=0$, an intersection point in $f(\sigma)\cdot \tau$ may have several preimages in $\sigma$
and should be counted with the corresponding multiplicity; equivalently, the intersection number is defined as the 
number of points in $\sigma\cap f^{-1}(\tau)$ modulo $2$.} modulo $2$. This definition is extended by linearity (over $\F_2$)
to arbitrary chains.

This yields, for $0\leq k\leq d$, an \emph{intersection number homomorphism} 
\begin{equation}
\hfill \map^\pitchfork\colon C_k(T) \rightarrow C^{d-k}(X),\hfill
\end{equation}
defined by $\map^\pitchfork(b)(a)=\map(a)\cdot b$ for each $a\in C_{d-k}(X).$
\end{definition}

It is well-known that the intersection number homomorphism is a \emph{chain-cochain map}, i.e., it commutes with the boundary and coboundary operators in the following sense (see, e.g., \cite[Sec.~2.2]{MabW} for a detailed review of this and other properties of intersection numbers).
\begin{lemma}
$$\map^\pitchfork(\partial a) =\delta\map^\pitchfork(a).$$
\end{lemma}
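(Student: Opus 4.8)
The statement to prove is the compatibility $\map^\pitchfork(\partial a) = \delta \map^\pitchfork(a)$ of the intersection-number homomorphism with the boundary and coboundary operators. Let me sketch how I'd prove this.

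The plan is to verify the identity by evaluating both sides on an arbitrary basis cochain, i.e., on a single cell of $X$, and then reduce the claim to a purely local statement about a codimension-one incidence in $M$. Concretely, fix $a \in C_k(T)$; then $\map^\pitchfork(\partial a) \in C^{d-k+1}(X)$ and $\delta\map^\pitchfork(a) \in C^{d-k+1}(X)$, so it suffices to check that for every $(d-k+1)$-cell $\sigma$ of $X$ the two values in $\F_2$ agree. By definition, $\delta\map^\pitchfork(a)(\sigma) = \sum_{\rho} \incident{\sigma}{\rho}\, \map^\pitchfork(a)(\rho) = \sum_{\rho \subseteq \partial\sigma} \map(\rho)\cdot a$, the sum ranging over the $(d-k)$-cells $\rho$ in the boundary of $\sigma$; this is exactly $\map(\partial\sigma)\cdot a$ by linearity of the intersection pairing. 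On the other side, $\map^\pitchfork(\partial a)(\sigma) = \map(\sigma)\cdot \partial a$. So the claim is the ``adjunction'' identity $\map(\partial\sigma)\cdot a = \map(\sigma)\cdot \partial a$ for a single cell $\sigma$ of dimension $d-k+1$ and a single $k$-chain $a$ of $T$; by bilinearity it is enough to take $a = \tau$ a single $k$-simplex of $T$.

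First I would reduce to general position: since $\map$ is in general position with respect to $T$, the relevant intersections $\map(\partial\sigma)\cap\tau$ and $\map(\sigma)\cap\partial\tau$ are finite sets of points lying in the interiors of top-dimensional strata, and $\map(\interior\sigma)\cap\interior\tau$ is a $1$-manifold (dimension $(d-k+1)+k-d = 1$) which, after a further small perturbation, is a disjoint union of finitely many arcs and circles properly embedded in the product of the open cells. The endpoints of these arcs lie on $\partial(\map(\sigma)\times\tau) = (\map(\partial\sigma)\times\tau)\cup(\map(\sigma)\times\partial\tau)$. The core of the argument is then a parity count: the total number of endpoints of a compact $1$-manifold with boundary is even, so the number of points of $\map(\partial\sigma)\cap\tau$ plus the number of points of $\map(\sigma)\cap\partial\tau$ is even; reading this modulo $2$ gives $\map(\partial\sigma)\cdot\tau + \map(\sigma)\cdot\partial\tau = 0$ in $\F_2$, which is the desired identity. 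This is the standard "the boundary of a $1$-manifold has even cardinality" proof of naturality of intersection numbers, and over $\F_2$ we are spared any bookkeeping of signs or orientations.

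The main obstacle I expect is the technical setup making ``$\map(\interior\sigma)\cap\interior\tau$ is a compact $1$-manifold whose boundary is $(\map(\partial\sigma)\cap\tau)\sqcup(\map(\sigma)\cap\partial\tau)$'' rigorous in the PL category: one must check that $\map|_{\sigma}$ can be perturbed (relative to nothing, or relative to a good triangulation) so that it is transverse to $\tau$ and to all faces of $\tau$ simultaneously, that the intersection is genuinely a PL $1$-manifold (no branching, because codimension-$1$ PL general position is exactly transversality of complementary-dimension strata), and that compactness holds because $\sigma$ and $\tau$ are compact. Once these general-position facts are in place — and they are collected in the general-position lemmas quoted just above — the identity follows from the parity count with essentially no further computation. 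I would present it exactly in this order: (1) reduce to evaluating on a single cell $\sigma$ and a single simplex $\tau$; (2) invoke general position to identify the relevant $1$-manifold and its boundary; (3) conclude by the even-cardinality-of-a-boundary argument modulo $2$. Since this is a standard fact from PL topology, I would keep the exposition brief and cite~\cite{Zeeman} for the underlying transversality statements rather than reproving them.
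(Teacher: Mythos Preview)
The paper does not actually prove this lemma: it is stated as well-known and attributed to Zeeman~\cite{Zeeman}, with no argument given. Your sketch is the standard proof one finds in that reference, and it is correct.

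One small technical point worth tightening: when $f|_\sigma$ is not injective, the set $f(\interior\sigma)\cap\interior\tau\subset M$ need not itself be a $1$-manifold. The object you want is the fiber product $\{(x,y)\in\sigma\times\tau:\ f(x)=y\}$, or equivalently $(f|_\sigma)^{-1}(\tau)\subset\sigma$; in PL general position this is a compact $1$-manifold whose boundary is precisely $(f|_{\partial\sigma})^{-1}(\interior\tau)\sqcup(f|_{\interior\sigma})^{-1}(\partial\tau)$. Your mention of ``the product of the open cells'' and of $\partial(f(\sigma)\times\tau)$ shows you are already thinking along these lines; once phrased this way, the even-cardinality-of-the-boundary argument gives $f(\partial\sigma)\cdot\tau + f(\sigma)\cdot\partial\tau = 0$ in $\F_2$ exactly as you describe.
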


For the proof of the main theorem, we need the following definition:

\begin{definition}[Chain-cochain homotopy]  Consider two chain-cochain maps
$\varphi,\psi \colon C_k(M)\to C^{d-k}(X)$ from the (non-augmented) chain complex of $M$ to the cochain complex of $X$.
A \emph{chain-cochain homotopy} between $\varphi$ and $\psi$ is a family of linear maps $h\colon C_k(M)\to C^{d-k-1}(X)$
such that  $\varphi-\psi=h\partial + \delta h$. To keep track of the various maps, it is convenient to keep in mind the following diagram:
\begin{equation}
\label{eq:chain-homotopy}
\xymatrix{
0 \ar[r]
&
C_d(M) \ar@<-.5ex>[d]_{\varphi} \ar@<.5ex>[d]^{\psi}
\ar[r]^{\partial}
\ar[dl]^{h}
&
C_{d-1}(M) \ar@<-.5ex>[d]_{\varphi} \ar@<.5ex>[d]^{\psi}
\ar[r]^{\partial}
\ar[dl]^{h}
&
\ar[dl]^{h}
\cdots
\ar[r]^{\partial}
&
C_{1}(M) \ar@<-.5ex>[d]_{\varphi} \ar@<.5ex>[d]^{\psi}
\ar[r]^{\partial}
\ar[dl]^{h}
&
C_{0}(M) \ar@<-.5ex>[d]_{\varphi} \ar@<.5ex>[d]^{\psi}
\ar[r]
\ar[dl]^{h}
&
0
\\
0 \ar[r]
&
C^0(X) \ar[r]_{\delta}
&
C^1(X) \ar[r]_{\delta}
&
\cdots \ar[r]_{\delta}
&
C^{d-1} (X) \ar[r]_{\delta}
&
C^d (X) \ar[r]
&
0
}
\end{equation}
\end{definition}

\section{Proof of the Overlap Theorem}

\begin{proof}[Proof of Theorem~\ref{thm:overlap}]
Let $\mu$ and $\sparsity_0$ be parameters that we will determine in the course of the proof. We assume that $X$ satisfies the assumptions of the theorem, in particular that it is locally $\sparsity$-sparse for some $\sparsity\leq \sparsity_0$.

Let $\map \colon X\to M$ be a map. By the discussion in Sec.~\ref{sec:approx} and by Lemmas~\ref{lem:GP} and \ref{lem:suff-fine}, we may assume that $f$ is PL and in general position with respect to a sufficiently fine PL triangulation $T$ of $M$.
 
We wish to show that there is a vertex $v$ of $T$ such that the intersection number cochain $\map^\pitchfork(v) \in C^d(X)$ satisfies $\|\map^\pitchfork(v)\| \geq \overlap$. We assume that this is not the case and we proceed to derive a contradiction.

Let $v_0$ be a fixed vertex of $T$; by assumption, $\|\map^\pitchfork(v_0)\|<\overlap$. (Note that if $f$ is not surjective then we can choose the triangulation $T$ and $v_0$ so that $\|\map^\pitchfork(v_0)\|=0$.)

We define a chain-cochain map\footnote{That is, a homomorphism $G\colon C_k(T)\rightarrow C^{d-k}(X)$ for every $k$ such that $G(\partial c)=\delta G(c)$ for $c\in C_k(T)$.}
$$G\colon C_\ast(T)\rightarrow C^{d-\ast}(X)$$
by setting $G(v):=\map^\pitchfork(v_0)$ for every vertex $v$ of $T$ and $G(c)=0$ for every $c\in C_k(T;{\F_2})$, $k>0$.

We will construct a \emph{chain-cochain homotopy} $H\colon C_\ast(T)\rightarrow C^{d-1-\ast}(X)$ between $\map^\pitchfork$ and $G$; that is, for every $k$, we construct a homomorphism
$$H\colon  C_k(T)\rightarrow C^{d-1-k}(X)$$
such that
\begin{equation}
\label{eq:homotopy-condition}
\hfill \map^\pitchfork(c) - G(c)= H(\partial c) + \delta H(c) \hfill
\end{equation}
for $c\in C_k(T)$. We stress that for this proof, we work with \emph{non-augmented} chain and cochain complexes as in \eqref{eq:chain-homotopy}, i.e., we use the convention that $C^{-1}(X)=0$. 
It follows that $G(c)=0$ for $k>0$ and that $H(c)=0$ for $c\in C_d(M)$.

The chain-cochain homotopy $H$ will yield the desired contradiction: Given the triangulation $T$ of $M$, the formal sum of all $d$-dimensional simplices of $T$ is a $d$-dimensional cycle $\zeta_M$ (here we use that $M$ has no boundary). Note that $\map^\pitchfork(\zeta_M)=\I^0_X$
(every vertex $v$ of $X$ is mapped into the interior of a unique $d$-simplex of $M$) but $G(\zeta_M)=0$. This is a contradiction, since
$$0\neq \I^0_X = \map^\pitchfork(\zeta_M) - G(\zeta_M) = \underbrace{H(\partial \zeta_M)}_{=0\textrm{ since }\partial \zeta_M=0} + \delta \underbrace{H(\zeta_M)}_{=0}=0.$$

To complete the proof, it remains construct $H$, which we will do by induction on $k$.

For $k=0$, we observe that for every vertex $v$ of $T$, the cochains $\map^\pitchfork(v)$ and $G(v)=\map^\pitchfork(v_0)$
are cohomologous, i.e., their difference is a coboundary: We assume that $M$ is connected, hence there is a $1$-chain (indeed, a path) $c$ in $T$ with $\partial c=v-v_0$, and so $\map^\pitchfork(v)-G(v)=\map^\pitchfork(v-v_0)=\delta \map^\pitchfork(c)$.
For every vertex $v$ of $T$, we set $H(v)$ to be a cofilling of $\map^\pitchfork(v)-G(v)$ of minimal norm (if there is more than one minimal cofilling, we choose one arbitrarily). Thus, the homotopy condition \eqref{eq:homotopy-condition} is satisfied for $0$-chains (since chains and cochains of dimension less than zero or larger than $d$ are, by convention, zero).

By choice of $H(v)$ and the coisoperimetric assumption on $X$, we have
$$\|H(v)\|\leq \isobound \underbrace{\|\map^\pitchfork(v)-\map^\pitchfork(v_0)\|}_{<2\overlap}<s_0:=2\isobound \overlap.$$

Inductively, assume that we have already defined $H$ on chains of dimension less than $k$ and that $\|H(\rho)\|<s_i$ for every $i$-simplex of $T$, $i<k$, where $s_i$ is a parameter that we will determine inductively. Thus, if $\tau$ is a $k$-simplex of $T$, then $H(\partial \tau)$ is already defined and has norm less than $(k+1)s_{k-1}$.

Moreover, we have $\|\map^\pitchfork(\tau)\|\leq \frac{d}{k}\sparsity \leq
d \sparsity$, by the sparsity assumption on $X$ and since the triangulation $T$ is sufficiently fine.

By construction, $z:=\map^\pitchfork(\tau)- H(\partial \tau)$ is a $(d-k)$-dimensional cocycle, and
\begin{equation}\label{eq:bd-z}
\hfill \|z\|\leq \|\map^\pitchfork(\tau)- H(\partial \tau)\| <d \sparsity + (k+1)s_{k-1}. \hfill
\end{equation}

If $z$ is cohomologically trivial, i.e., $z\in B^{d-k}(X)$, then we define $H(\tau)$ to be a minimal cofilling of $z$ and extend $H$ to $C_k(T)$ by linearity. By assumption on $X$, we get 
$$\|H(\tau)\| < s_{k}:=\isobound\left( 
d \sparsity + (k+1)s_{k-1}\right).$$
Note that this recursion yields $s_k =
d \sparsity(\isobound+\dots+\isobound^k)+(k+1)! \isobound^{k+1} 2\overlap.$

If $z$ is nontrivial,\footnote{Note that in the special case that $X$ is connected and $k=d$, the only nontrivial $0$-cocycle is $z=\I_X^0$, hence $\|z\|=1$.} then by the assumption on large cosystoles and \eqref{eq:bd-z},
$$\sysbound \leq \|z\| < d \sparsity + (k+1)s_{k-1},$$
which is a contradiction if we choose $\overlap$ and $\sparsity_0$ (and hence $\sparsity$) sufficiently small with respect to $d$, $\isobound$ and $\sysbound$.
\end{proof}

\begin{remarks}
\begin{enumerate}
\item In many interesting cases, $X$ belongs to an infinite family of complexes for which the local sparsity parameter $\sparsity$
tends to zero as the size of the complex increases. For instance, if $X$ is the $d$-skeleton of the $n$-simplex, $n\to \infty$, then
we have $\sparsity=O(1/n)$. For complexes with local sparsity $\sparsity=o(1)$, the above proof yields
$\mu \geq \frac{\sysbound}{2(k+1)!L^k}+o(1)$.
If $M$ is unbounded, then, as remarked in the proof, we can take the vertex $v_0$ to satisfy $\map^\pitchfork(v_0)=0$, which improves the estimate by a factor of $2$.

More quantitative information and better bounds on the overlap constant (which are of interest for specific families of complexes, e.g., skeleta of simplices) can be gleaned from the proof by a more refined analysis through the \emph{cofilling profiles} of $X$ \cite{Gromov}, which estimate the size of a minimal cofilling of a cocycle $b$ as a possibly nonlinear function of $\|b\|$. Further improvements in the estimates are possible trough the notion of \emph{pagodas} \cite{MW}.

\item The proof of the overlap theorem is very robust and easily generalizes to other settings, in particular to other coefficient rings and other norms. Suppose that $R$ is a fixed ring of coefficients (commutative, with $1$), and consider (co)chains and (co)homology with $R$-coefficients. If $R$ is not of characteristic $2$, we need to add some minor assumptions to deal with orientations. First, we need to assume that he target manifold $M$ is $R$-orientable, i.e., that $H_d(M;R)\cong R$, generated by a fundamental homology class $[M]$.
The definition of the intersection number changes slightly: if two oriented linear simplices $\sigma, \tau$ of complementary dimensions in $M$ intersect transversely in a single point, then their orientations determine a local orientation of $M$, and we set the intersection number $\sigma\cdot \tau$ to be $+1$ or $-1$ depending on whether this orientation agrees with the chosen global orientation of $M$ or not.

Second, we need to assume that the norm of a cochain is invariant under sign changes in the values of the cochain, i.e., if two $k$-cochains $c,c'\in C^k(X;R)$ satisfy $c(\sigma)=\pm c'(\sigma)$ for every orientated $k$-cell $\sigma$ of $X$ (the signs may be different for different $\sigma$), then $\|c\|=\|c\|$.

With these additional assumptions, the proof of Theorem~\ref{thm:overlap} goes through also for  $R$-coefficients
and yields that for every $\map \colon X\to M$, there exists $p\in M$ such \eqref{eq:norm-overlap} holds.

\item For norms other than the normalized Hamming norm, $\|\map^\pitchfork(p)\|\geq \overlap$ does not necessarily imply that
\eqref{eq:overlap} holds. For instance, suppose that $R=\R$ and that we work with the $\ell_2$-norm. In this case, large norm
$\|\map^\pitchfork(p)\|$ might be caused by a single $d$-simplex $\sigma$ such that $\map^\pitchfork(p)(\sigma)$ is a large integer,
i.e., $\map(\sigma)$ intersects $p$ with large multiplicity. However, this problem does not occur if we impose additional assumptions on
the map $\map$, e.g., that $\map^\pitchfork(p)(\sigma)$ is bounded by some constant $K$ in absolute value (e.g., if $\map$ is linear, then we can take $K=1$).

\item We used the assumption that $M$ is piecewise-linear in order to apply standard general position arguments from piecewise-linear topology. We believe that the result holds more generally if $M$ is a homology manifold.  General position arguments for homology manifolds are much more subtle, but for the proof  we do not really need to perturb the map $\map$ to general position (which may not be possible), we only need a general position chain map that is close to the chain map induced by $f$. We plan to investigate this in more detail in a future paper.
\end{enumerate}
\end{remarks}

\subsubsection*{Acknowledgement.} We would like to thank the anonymous referees 
for many helpful remarks concerning the presentation.

\end{document}